\documentclass[a4paper,11pt]{amsart}
\usepackage[utf8]{inputenc}
\usepackage{amsmath}
\usepackage{amsthm}
\usepackage{thmtools}
\usepackage{amssymb}
\usepackage{thm-restate}
\usepackage{hyperref}
\usepackage[hmarginratio=1:1]{geometry}
\usepackage{enumitem}

\title{On the congruence subgroup problem for branch groups}
\author{Alejandra Garrido}
\thanks{This research was supported by \emph{Fundaci\'{o}n La Caixa, Spain}}
\thanks{I would like to thank my supervisor Prof. John S. Wilson for his help in the preparation of this paper and the referee for useful comments.}

\declaretheorem[name=Theorem]{mainthm}

\declaretheorem[name=Theorem, numberwithin=section]{thm}
\declaretheorem[name=Lemma,sibling=thm]{lem}
\declaretheorem[name=Proposition, sibling=thm]{prop}

\newcommand{\lf}{\leq_{\mathrm{f}}}
\newcommand{\nf}{\trianglelefteq_{\mathrm{f}}}
\newcommand{\lva}{\leq_{\mathrm{va}}}
\newcommand{\nva}{\trianglelefteq_{\mathrm{va}}}

\DeclareMathOperator{\St}{Stab}
\DeclareMathOperator{\rst}{rist}
\DeclareMathOperator{\Aut}{Aut}
\DeclareMathOperator{\R}{R_G}
\DeclareMathOperator{\N}{N_G}
\DeclareMathOperator{\C}{C_G}

\begin{document}

\begin{abstract}
We answer a question of Bartholdi, Siegenthaler and Zalesskii, 
showing that the congruence subgroup problem for branch groups is independent of the branch action on a tree.
We prove that the congruence topology of a branch group is determined by the group;
specifically, by its structure graph, an object first introduced by Wilson.
We also give a more natural definition of this graph.
\end{abstract}

\maketitle

\section{Introduction}

Groups acting on rooted trees have been the subject of intense study over the past few decades
after the appearance in the 1980s of examples with exotic properties 
(e.g.\ finitely generated infinite torsion groups, groups of intermediate word growth, amenable but not elementary amenable groups, etc.).
Several attempts were made at the time to round up these examples into one class of groups.
One of these led to the definition of branch groups (\cite{Branchgroups}),
which also arise in the classification of just infinite groups (\cite{wilsonNewhorizons}). 

For a sequence $(m_n)_{n\geq 0}$ of integers $m_n\geq 2$, the \emph{rooted tree of type $(m_n)$} is a tree $T$
with a distinguished vertex $v_0$, called the \emph{root}, of valency $m_0$ and such that every vertex at distance $n\geq 1$ from $v_0$ has
valency $m_n+1$
(where the distance of a vertex from $v_0$ is the number of edges in the unique path from that vertex to $v_0$).
The set of all vertices at distance $n$ from $v_0$ is the $n$th layer of $T$, denoted by $V_n$. 
We picture $T$ with $v_0$ at the top and with $m_n$ edges descending from each vertex in $V_n$, 
so we call the vertices below a given $v$ the \emph{descendants} of $v$.
Each vertex $v\in V_r$ is the root of a subtree $T_v$ of type $(m_n)_{n\geq r}$.

Let $G$ be a group acting faithfully on $T$ fixing $v_0$. 
For each vertex $v$, the \emph{rigid stabilizer of $v$} is the subgroup $\rst_G(v)$ of elements of $G$ which fix every vertex outside $T_v$.
For each $n\geq 0$, the direct product $\rst_G(n)=\langle \rst_G(v) \mid v \in V_n\rangle$ 
is the \emph{rigid stabilizer of the $n$th layer}.
We call the faithful action of $G$ on $T$ a \emph{branch action} if the following holds for all $n\geq 0$:
\begin{enumerate}[label=(\roman*)]
 \item $G$ acts transitively on $V_n$;
 \item $\rst_G(n)$ has finite index in $G$.
\end{enumerate}
We say that $G$ is a \emph{branch group} if there exists a branch action of $G$ on some tree $T$. 

Since branch groups have such specific actions on rooted trees, it is natural to wonder what the action tells us about the subgroup structure of the group.
Consider, for each $n\geq0$, the kernel $\St_G(n)$ of the action of $G$ on $V_n$;
can we ``detect'' every finite index subgroup of $G$ (or, equivalently, every finite quotient) by looking at the finite quotients $G/\St_G(n)$?
In other words, does every finite index subgroup of $G$ contain some $\St_G(n)$?
We can rephrase this question in terms of profinite completions. 
Taking the subgroups $\{\St_G(n) \mid n\geq0\}$ as a neighbourhood basis for the identity gives a topology on $G$ -- the \emph{congruence topology} --
and the completion $\overline{G}$ of $G$ with respect to this topology is a profinite group called the \emph{congruence completion} of $G$.
As $G$ acts faithfully on $T$ we have $\bigcap_n \St_G(n)=1$, so $G$ embeds in $\overline{G}$.
\emph{A fortiori}, $G$ is residually finite, so it also embeds in its profinite completion $\widehat{G}$ which maps onto $\overline{G}$. 
Asking whether each finite index subgroup of $G$ contains some stabilizer $\St_G(n)$ is tantamount to asking whether the map $\widehat{G}\to \overline{G}$ is injective.
The \emph{congruence subgroup problem} asks us to compute the \emph{congruence kernel} $C$ of this map, which measures the deviation from a positive answer.

Since a branch group $G$ has another obvious family of finite index subgroups, namely $\{\rst_G(n)\mid n\geq 0\}$, 
we may ask the same question for this family. 
Let $\widetilde{G}$ denote the \emph{branch completion} of $G$; 
that is, the completion of $G$ with respect to the \emph{branch topology}, 
which is generated by taking $\{\rst_G(n)\mid n\geq0\}$ as a neighbourhood basis of the identity.
Then, as above, there is a surjective homomorphism $\widehat{G}\rightarrow\widetilde{G}$
and we are asked to determine the \emph{branch kernel} $B$ of this map.
Note that the branch topology is stronger than the congruence topology 
so that we may also ask about the kernel $R$ of the map $\widetilde{G}\rightarrow\overline{G}$, the \emph{rigid kernel}.

The term ``congruence'' is used by analogy with the classical congruence subgroup problem for $\mathrm{SL}_n(\mathbb{Z})$ (solved in \cite{bass1964,mennicke}), from which these questions take inspiration.
There are now several generalizations of this problem:
 for instance, a now classical generalization in the context of algebraic groups
 (see \cite{RaghunathanSurvey} and references therein), 
 and a more recent one in the context of automorphisms of free groups $F_n$, 
with the kernels of the action on finite quotients of $F_n$ playing the role of our subgroups $\St_G(n)$
(see \cite{BuxErshovRapinchuk}).

The problem of determining the congruence, branch and rigid kernels for a branch group $G$ was first posed in \cite{bartholdiCSP}, 
where the authors also ask a ``preliminary question of great importance'':\\

\noindent\textbf{Question} Do any of the kernels depend on the branch action of $G$?\\

In other words, is the nature of these kernels a property of the branch action or of the group?



We provide a full answer to the above question by proving the following:
\begin{mainthm}\label{congruence}
 Let $G$ have two branch actions on trees. 
 Then the congruence kernels with respect to these actions coincide.
\end{mainthm}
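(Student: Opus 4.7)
The strategy is to show that although the layer stabilisers $\St_G(n)$ depend on the branch action of $G$, the topology they generate on $G$ is intrinsic. The bridge between any two branch actions will be an intrinsic description of the rigid stabilisers (the structure-graph reformulation promised in the abstract), together with the identity $\St_G(v) = \N(\rst_G(v))$.

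\emph{Step 1: recast layer stabilisers via normalisers.} I would first prove that $\St_G(v) = \N(\rst_G(v))$ whenever $\rst_G(v) \neq 1$. The containment $\St_G(v) \subseteq \N(\rst_G(v))$ is immediate because $\St_G(v)$ preserves $T_v$ setwise. For the reverse, suppose $g \in G$ normalises $\rst_G(v)$ and moves $v$ to a different vertex $w$ of the same layer $V_n$. Then $g\rst_G(v)g^{-1} = \rst_G(w)$, so $\rst_G(v) = \rst_G(w)$; but inside the finite-index normal subgroup $\rst_G(n) = \prod_{u \in V_n} \rst_G(u)$, the factors $\rst_G(v)$ and $\rst_G(w)$ intersect trivially, forcing $\rst_G(v) = 1$, a contradiction. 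Intersecting over $v \in V_n$ yields
\[
\St_G(n) = \bigcap_{v \in V_n} \N(\rst_G(v)),
\]
so the congruence topology is generated by intersections of normalisers of rigid stabilisers.

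\emph{Step 2: intrinsic characterisation of rigid stabilisers.} Next I would describe the family $\{\rst_G(v) : v \in T\}$ in purely group-theoretic terms. These subgroups are distinguished among subgroups of $G$ by the following features: each is subnormal in $G$, each has finitely many $G$-conjugates that pairwise commute and intersect pairwise trivially, and the product of those conjugates is a normal subgroup of finite index. After taking appropriate equivalence classes, the resulting family carries the structure of a tree under inclusion; this is Wilson's structure graph $\Gamma(G)$, here reformulated so that it manifestly depends only on the abstract group $G$. The converse direction is the main obstacle I expect: showing that in any branch action of $G$ on $T$ the rigid stabilisers are exactly the subgroups picked out by these intrinsic conditions, and that inclusion matches the descendant relation on $T$. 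This will require exploiting the commuting and direct-product structure of the candidates carefully, and handling subgroups that are only commensurable to rigid stabilisers.

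\emph{Step 3: comparison of the two topologies.} Given two branch actions on trees $T$ and $T'$, Step 2 identifies their families of rigid stabilisers canonically. Fix $n$. For each $v \in V_n$, the subgroup $\rst_G(v)$ is also a rigid stabiliser in the second action, say $\rst_G(v')$ for some $v' \in T'$; let $S' \subset T'$ be the resulting finite set of vertices. Choose $m$ greater than the maximum level of the vertices in $S'$, so that every $v' \in S'$ has a descendant in $V_m'$. By Step 1 applied to the second action, the $m$-th layer stabiliser $\bigcap_{w' \in V_m'} \N(\rst_G(w'))$ is contained in $\N(\rst_G(v'))$ for each $v' \in S'$, and re-identifying rigid stabilisers across the two actions then shows this intersection lies in $\bigcap_{v \in V_n} \N(\rst_G(v)) = \St_G(n)$. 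The reverse inclusion is symmetric, so the two congruence topologies coincide, and with them the two congruence kernels.
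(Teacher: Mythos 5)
Your Step 1 is correct and matches what the paper actually uses: for a branch action the vertex stabilizer $\St_G(v)$ equals $\N(\rst_G(v))$, so the congruence topology is generated by the subgroups $\bigcap_{v\in V_n}\N(\rst_G(v))$. The genuine gap lies in Steps 2 and 3. The intrinsic conditions you list (finitely many pairwise commuting, pairwise trivially intersecting conjugates whose product is normal) define the \emph{basal} subgroups of $G$, and this family is in general strictly larger than the set of rigid stabilizers of any one tree: the quotient by the natural equivalence is a graph (the structure graph), which need not be a tree at all --- the paper emphasizes that it is a tree only under additional hypotheses --- and its vertices need not all arise from vertices of $T$. Consequently the pivotal claim of Step 3, that each $\rst_G(v)$ for the first action literally \emph{equals} some $\rst_G(v')$ for the second action, is false in general; two branch actions of the same group need not have matching families of rigid stabilizers, so the ``re-identification'' you rely on does not exist.

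What is needed in place of the exact identification is a weaker order-theoretic statement plus a rigidity property of basal subgroups, and neither is supplied by your proposal. First, a coinitiality lemma: for every non-trivial subgroup $H$ with finitely many conjugates there is a vertex $v'$ of $T'$ such that $[\rst'(v')]\leq [H]$ in the structure lattice, i.e.\ $\rst'(v')\cap H$ contains the derived group of a finite-index subgroup of $\rst'(v')$. The paper proves this by a non-obvious commutator computation (showing $[h,k]=[[h,g],k]\in H$ for suitable $h,k$ in a rigid stabilizer and $g\in H$ moving the relevant vertex); it does not follow formally from the properties you list. Second, the dichotomy for a basal subgroup $B$: either $B^x=B$ or $B^x\cap B=1$, the latter forcing $[B^x]\wedge[B]=[1]$. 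Combining these, if $x$ stabilizes the layer of $v'$ in $T'$ then $1\neq[\rst'(v')]\leq [\rst(v)]^x\wedge[\rst(v)]$, whence $x$ normalizes $\rst(v)$, and normality of the layer stabilizer of $T'$ lets you intersect over the whole layer $V_n$ of $T$ to land inside $\St_G(n)$. Without these two ingredients your Step 3 does not go through.
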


\begin{restatable}{mainthm}{bkernel}\label{branchkernel}
 Let $G$ have two branch actions on trees. 
 Then the branch kernels with respect to these actions coincide.
\end{restatable}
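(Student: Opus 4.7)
The plan is to show that the branch topology on $G$ is an intrinsic invariant of $G$: once we know that the two branch actions induce the same branch topology, the two continuous surjections $\widehat{G}\rightarrow\widetilde{G}$ factor through the same quotient and the branch kernels agree inside $\widehat{G}$.

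Fix two branch actions of $G$, on trees $T$ and $T'$. Write $U_n$ and $U'_n$ for the rigid stabilizers of the respective $n$-th layers, and similarly $\St_n$, $\St'_n$ for the level stabilizers. By symmetry, it suffices to show that for each $n\geq 0$ there exists $m\geq 0$ with $U'_m\leq U_n$. Theorem~\ref{congruence} already tells us that the congruence topologies coincide, so the finite-index subgroup $U_n$ is open in the $T'$-congruence topology and therefore contains some $\St'_k$. The substantive step is to replace this level stabilizer by a rigid layer stabilizer $U'_m$.

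For this I would invoke the structure graph $\mathcal{G}$ whose intrinsic definition is announced in the abstract. Its vertices should correspond to commensurability classes of subnormal subgroups of $G$ with the appropriate direct-product behaviour, and its edges to refinement. The rigid vertex stabilizers arising from any branch action are concrete realisations of these vertices at a prescribed depth, so the two families $\{U_n\}$ and $\{U'_m\}$ should appear as two enumerations of one and the same intrinsic object. For $m$ sufficiently large, each factor of $U'_m$ then corresponds to a vertex strictly finer than the vertices making up $U_n$, and hence should lie inside the $U_n$-factor coming from a specific rigid vertex stabilizer of $T$; taking the product over all factors yields $U'_m\leq U_n$.

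The main obstacle is to convert the commensurability information carried by the structure graph into genuine containment, since commensurability alone does not imply inclusion. What I expect to save the day is a canonical characterisation of the rigid stabilizers as distinguished representatives of their commensurability class—for instance as the largest subgroups of the class that are contained in a prescribed finite-index subgroup of their normal closure. With such a characterisation in place, matching the structure-graph data across the two actions, combined with a finite-index correction supplied by Theorem~\ref{congruence}, should close the argument.
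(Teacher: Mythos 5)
There is a genuine gap, and in fact the opening reduction contains an error that would trivialise the theorem if it were correct. You claim that since the two congruence topologies coincide (\autoref*{congruence}), ``the finite-index subgroup $U_n$ is open in the $T'$-congruence topology and therefore contains some $\St'_k$.'' But a finite-index subgroup of $G$ need not be open in the congruence topology --- that is precisely what the congruence kernel measures, and more specifically the containment $\rst_T(n)\geq \St_T(k)$ for some $k$ is exactly the statement that the rigid kernel vanishes, which fails for some branch groups (e.g.\ the Hanoi towers group). Note also that if this step were valid you would be finished immediately, since $\rst_{T'}(k)\leq \St_{T'}(k)$ always holds: your ``substantive step'' of replacing $\St'_k$ by some $U'_m$ is trivial with $m=k$. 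So the congruence theorem cannot be the engine here; the branch topology must be compared with an intrinsic object directly, which is what the paper does.

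Your second half correctly identifies both the right intrinsic object (the structure graph) and the real obstacle --- converting an inequality of classes $[\rst_{T'}(w)]\leq[\rst_T(v)]$ into an actual containment of subgroups --- but you leave the resolution as a hope, and the canonical characterisation you guess (``largest subgroups of the class contained in a prescribed finite-index subgroup of their normal closure'') is not the one that works. The paper's mechanism is the \emph{rigid normalizer} $\R(A)=\bigcap\bigl(\N(B)\mid B \text{ basal},\ A\cap B=1\bigr)$: by \autoref*{rigid}, $\R(A)$ is the unique maximal element of the class $[A]$, the map is monotone ($[A_1]\leq[A_2]$ implies $\R(A_1)\leq\R(A_2)$), and crucially $\R(\rst(v))=\rst(v)$, so rigid vertex stabilizers are the maximal representatives of their classes. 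Combined with \autoref*{lemma} (every non-trivial $B\in L(G)$ satisfies $[\rst_{T'}(w)]\leq[B]$ for some vertex $w$ of $T'$), this yields $\rst_{T'}(w)=\R(\rst_{T'}(w))\leq \R(\rst_T(v))=\rst_T(v)$, and level-transitivity then gives $\rst_{T'}(m)\leq\rst_T(n)$. Without proving some such maximality statement, your argument does not close: commensurability classes alone cannot produce the required containments, as you yourself observe.
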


The above immediately imply that the rigid kernels of a branch group with respect to any two branch actions are naturally isomorphic.

We will deduce these theorems from a more powerful observation,
namely that the congruence and branch topologies of a branch group $G$ can be defined in purely group-theoretic terms, 
with no reference to a branch action, using the \emph{structure graph} $\mathcal{B}$ of $G$.
This graph depends only on the subgroup structure of $G$ and is related to every tree on which $G$ acts as a branch group.
As we shall see in Section 3 (\autoref*{prop}), if $G$ acts on $T$ as a branch group then $T$ embeds $G$-equivariantly in $\mathcal{B}$, 
where the action of $G$ on $\mathcal{B}$ is that induced by conjugation on subgroups.
Further, the image of $T$ is coinitial in $\mathcal{B}$ (\autoref*{lemma}).
Once we have analogues of $\St_G(n)$ and $\rst_G(n)$ for the action of $G$ on $\mathcal{B}$, 
the above mentioned results are the key to showing that 
all congruence and branch topologies induced by a branch action coincide; they all agree with the topologies with respect to $\mathcal{B} $.

In Section \ref*{structure lattice} we define the structure graph and the larger \emph{structure lattice} of a branch group. 
These very useful objects were first introduced in \cite{wilsonJIclassification} and \cite{wilsonNewhorizons}
and used to analyse just infinite groups. 
They were also used in \cite{Hardy} to characterize branch groups in purely group-theoretic terms.
In those settings, they are defined as quotients of the lattice of subnormal subgroups of a branch group.
Here we give a more direct description by examining the subgroups with finitely many conjugates.




\section{The structure lattice and structure graph}\label{structure lattice}

\medskip
\noindent\textbf{Notation.}  
We write $H\lf G$ and $H\nf G$ to indicate, respectively,
that $H$ is a finite index subgroup of $G$ and that $H$ is a normal finite index subgroup of $G$.
We also use the standard notation $\N(H)$ (resp.\ $\C(H)$) for the normalizer (resp.\ centralizer) of a subgroup $H$ in $G$. 
Furthermore, $H^G$ will denote the subgroup generated by all conjugates of $H$ by $G$.
Throughout the rest of the paper, $G$ will denote a branch group.

\medskip
Subgroups of branch groups are subject to several constraints.
The proof of \cite[Lemma 2]{geomded} shows that branch groups have no non-trivial virtually abelian normal subgroups 
and the following is obtained in \cite[Theorem 4]{slavachapter}:

\begin{thm}\label{slava}  
Suppose that $G$ is a branch group acting on a tree $T$ and let $K\trianglelefteq G$ with $K\neq1$. 
Then $K$ contains the derived subgroup $\rst_G(n)'$ of $\rst_G(n)$ for some integer $n$. 
\end{thm}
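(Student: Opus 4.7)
The plan is to exploit a non-trivial element $k\in K$ together with the faithfulness of the action to produce a disjointness phenomenon in the tree, and then use a commutator trick twice to extract $\rst_G(v)'$ for a single vertex $v$; normality of $K$ together with transitivity of $G$ on layers then yields $\rst_G(n)'$.

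\textbf{Step 1: Produce a disjoint subtree.} Pick $k\in K$ with $k\neq 1$. Since the action is faithful, $\bigcap_n \St_G(n)=1$, so there exists $n\geq 0$ and a vertex $v\in V_n$ with $k(v)\neq v$. Because $k$ fixes the root and preserves layers, $k(v)$ also lies in $V_n$, and distinct vertices of $V_n$ are roots of disjoint subtrees. Hence $T_v\cap k(T_v)=T_v\cap T_{k(v)}=\emptyset$.

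\textbf{Step 2: First commutator.} For any $g\in\rst_G(v)$, the element $k^{-1}g^{-1}k$ is supported on $T_{k^{-1}(v)}$, which is disjoint from $T_v$. Thus $[k,g]=k^{-1}g^{-1}k\cdot g$ is a product of two elements with disjoint supports, one lying in $\rst_G(v)$ and the other in $\rst_G(k^{-1}(v))$. Crucially, $[k,g]\in K$ because $K$ is normal, so both $k^{-1}$ and $g^{-1}kg$ belong to $K$.

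\textbf{Step 3: Second commutator.} For $g_1,g_2\in\rst_G(v)$, the disjointness of supports gives that $k^{-1}g_1^{-1}k$ commutes with $g_2$. Hence
\[
[[k,g_1],g_2] \;=\; [k^{-1}g_1^{-1}k\cdot g_1,\,g_2] \;=\; [g_1,g_2].
\]
Since $[k,g_1]\in K$ and $K$ is normal, $[[k,g_1],g_2]\in K$, so $[g_1,g_2]\in K$ for every $g_1,g_2\in\rst_G(v)$. Therefore $\rst_G(v)'\leq K$.

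\textbf{Step 4: Spread over the layer.} Because $G$ acts transitively on $V_n$, every $w\in V_n$ has the form $w=h(v)$ for some $h\in G$, and $\rst_G(w)=h\,\rst_G(v)\,h^{-1}$, hence $\rst_G(w)'=h\,\rst_G(v)'\,h^{-1}\leq K$ by normality of $K$. Since the rigid stabilizers of the vertices of $V_n$ commute pairwise and $\rst_G(n)$ is their direct product, $\rst_G(n)'=\prod_{w\in V_n}\rst_G(w)'\leq K$, completing the proof.

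The main obstacle is Step 3: one needs the right bookkeeping to see that the iterated commutator collapses to $[g_1,g_2]$, which in turn hinges on Step 1 providing \emph{genuinely disjoint} subtrees (not merely $v\neq k(v)$ in some loose sense). Once the disjointness is secured, the rest is a routine commutator calculation together with the structural fact that $\rst_G(n)$ decomposes as a direct product over $V_n$.
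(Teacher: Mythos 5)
Your proof is correct and is essentially the standard argument for this result (the paper itself only cites it from \cite{slavachapter} rather than proving it): find a vertex moved by some $k\in K$, use disjointness of supports to collapse the double commutator $[[k,g_1],g_2]$ to $[g_1,g_2]$, and spread over the layer by transitivity. Indeed, the very same commutator identity appears in the paper's proof of \autoref{lemma}, so your approach matches the one underlying the paper.
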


Thus all branch groups are just non-(virtually abelian); that is, they are not virtually abelian but all of their proper quotients are.

Let $L(G)$ be the collection of all subgroups of $G$ which have finitely many conjugates (in other words, whose normalizer has finite index).
If $H,K\in L(G)$, then clearly $H\cap K, \langle H,K\rangle \in L(G)$.
Thus $L(G)$ forms a lattice with respect to subgroup inclusion,
with $H \cap K$ and $\langle H, K \rangle$ respectively the meet and join of two elements $H,K$. 
Lemma 2.2 of \cite{mewil} shows that $L(G)$ contains no non-trivial virtually soluble subgroups. 
We will use this without further comment in the remainder of the paper.

This allows us to prove the following, which is a generalization of \cite[Theorem 8.3.1]{Hardy} and \cite[Lemma 4.3]{wilsonNewhorizons}.

\begin{prop}\label{centralizers}
 Let $H, K\in L(G)$ with $K\trianglelefteq H$ and $H/K$ virtually nilpotent. 
 Then $\C(K)=\C(H)$.
\end{prop}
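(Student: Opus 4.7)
The containment $\C(H) \subseteq \C(K)$ is immediate from $K \leq H$. Setting $D := \C(K)$, my goal is to show $D \leq \C(H)$, equivalently $[D, H] = 1$. The plan is first to reduce to the case where $H/K$ is abelian via central-series arguments, and then to show $[D, H]$ is virtually soluble, hence trivial by the hypothesis that $L(G)$ contains no nontrivial virtually soluble subgroup.

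\emph{Reduction to the abelian case.} I take a characteristic nilpotent subgroup of finite index in $H/K$ (say, its Fitting subgroup) and pull back to $N$ with $K \leq N \leq H$, $N/K$ nilpotent and $H/N$ finite; then $N \in L(G)$ since $\N(N)$ contains $\N(H) \cap \N(K)$. It will suffice to prove $\C(K) = \C(N)$ and $\C(N) = \C(H)$ separately. For the former I walk up the upper central series $K = K_0 \leq K_1 \leq \cdots \leq K_c = N$ with $K_{i+1}/K_i = Z(N/K_i)$; each $K_i$ is characteristic in $N$ above $K$, hence in $L(G)$, and successive quotients are abelian, so inductively the problem reduces to the case $H/K$ abelian. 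The equality $\C(N) = \C(H)$ is the finite-quotient case and goes through by the same arguments below with \emph{virtually nilpotent} replaced by \emph{finite}.

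\emph{Groundwork in the abelian case.} Assume $H/K$ abelian. The intersection $D \cap H$ equals $C_H(K)$ and contains $Z(K) = D \cap K$, which is central in $D \cap H$ because $D$ centralizes $K$. The restriction of $H \to H/K$ to $D \cap H$ has kernel $Z(K)$, so $(D \cap H)/Z(K)$ embeds in the abelian group $H/K$; hence $D \cap H$ is abelian-by-abelian, in particular virtually soluble. Being the intersection of two elements of $L(G)$, it is itself in $L(G)$, so the standing hypothesis forces $D \cap H = 1$, and in particular $Z(K) = 1$. Since $H \leq \N(K)$ and $D \trianglelefteq \N(K)$, $H$ normalizes $D$, so $[D, H] \leq D$; further $\N([D, H]) \supseteq \N(D) \cap \N(H)$ has finite index, so $[D, H] \in L(G)$.

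\emph{The crux: $[D, H] = 1$.} By the no-virtually-soluble property of $L(G)$, it suffices to show $[D, H]$ is virtually soluble. The route I take is to show $[D, H] \leq H$: combined with $[D, H] \leq D$ and $D \cap H = 1$, this forces $[D, H] = 1$; equivalently, $D \leq \N(H)$. The finite-index subgroup $D^* := D \cap \N(H)$ is easy to handle: $[D^*, H] \leq D \cap H = 1$, so $D^* \leq \C(H)$. The main obstacle will be promoting this to all of $D$, i.e.\ showing that the $D$-orbit of $H$ under conjugation is a singleton. I would exploit $Z(K) = 1$ (which yields $\N(K)/D \hookrightarrow \Aut(K)$) together with the commutator identity $[d, h_1]^{h_2} = [d^{h_2}, h_1]$ (valid when $H/K$ is abelian and $K$ centralizes $D$), and a careful analysis in $\N(K)/K$, to force the $D$-action on $\{gHg^{-1} : g \in D\}$ to be trivial. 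This commutator-algebraic step is the heart of the proof and where I expect the principal difficulty to lie.
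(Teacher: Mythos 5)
Your setup is sound as far as it goes: the containments $\C(H)\leq\C(K)=:D$, the observation that $D\cap H\in L(G)$ is soluble (via $Z(K)=D\cap K$ and the embedding of $(D\cap H)/Z(K)$ into $H/K$) and hence trivial, and the fact that $H$ normalizes $D$ so that $[D,H]\leq D$ — all of this is correct and parallels the paper. But the proof is not complete: the entire argument hinges on showing $[D,H]\leq H$, i.e.\ that $D$ normalizes $H$, and you explicitly leave this as a hope (``I would exploit \dots a careful analysis \dots this is where I expect the principal difficulty to lie''). That is precisely the hard step, and the commutator-identity route you sketch is not obviously workable: there is no a priori reason why $\C(K)$ should normalize $H$, and establishing it directly is essentially equivalent to the proposition itself (a posteriori $D=\C(H)\leq\N(H)$, but you cannot assume this). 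So the crux is missing.

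The missing idea is to \emph{change $H$} rather than fight for $D\leq\N(H)$: replace $H$ by $H_1:=H^D$, the subgroup generated by the $D$-conjugates of $H$. Then $D$ normalizes $H_1$ by construction, so $[D,H_1]\leq D\cap H_1$ for free, and your intersection argument (now applied to $H_1$) gives $D\cap H_1=1$, whence $D\leq\C(H_1)\leq\C(H)\leq D$. The price is that one must check $H_1/K$ is still virtually nilpotent; this is where the paper invests its effort, proving via Fitting's theorem and Dicman's lemma that the normal closure of a virtually nilpotent subgroup with finitely many conjugates is again virtually nilpotent. (This also makes your preliminary reduction to the abelian case unnecessary: the paper handles the virtually nilpotent hypothesis in one step. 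Note too that the reduction would not obviously survive the passage to $H^D$, since a product of $D$-conjugates of an abelian group need not be abelian — another reason the Fitting/Dicman claim is the right tool.) Without this replacement, or some substitute for it, your argument does not close.
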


\begin{proof}
First we claim that if $A$ is a subgroup with finitely many conjugates in a group $\Gamma$ and $A$ is virtually nilpotent then so is $ A^{\Gamma}$.
Let $N$ be the normal core of $\mathrm{N}_{\Gamma}(A)$, so that $A_0:= A \cap N$ is virtually nilpotent and normal in $N$.
By Fitting's theorem (\cite[5.2.8]{Robinson}), $A_0$ has a unique maximal nilpotent normal subgroup, $B$ say, 
which is normal in $N$.
The finitely many $\Gamma$-conjugates of $B$ are also nilpotent and normal in $N$;
thus  $B^{\Gamma}$ is nilpotent, again by Fitting's theorem.
It remains to show that $ B^{\Gamma}$ has finite index in $ A^{\Gamma}$. 
Since $A/B$ is finite so is the quotient $(A B^{\Gamma})/ B^{\Gamma}$
and this has finitely many conjugates in ${\Gamma}/B^{\Gamma}$ because $A$ has finitely many in $\Gamma$.
Therefore the quotient $(A^{\Gamma} B^{\Gamma})/B^{\Gamma} \cong A^{\Gamma}/B^{\Gamma}$ is finite 
by Dicman's lemma (see \cite[14.5.7]{Robinson}) and our claim is proved.

Suppose that $H,K\in L(G)$ with $H/K$ virtually nilpotent and write $C:=\C(K)\in L(G)$.
We will prove the proposition for $H_1:=H^C$, $K$ and deduce the result for $H,K$ from this.
To see that $H_1/K$ is virtually nilpotent, note that for each $c\in C$ the conjugate $(H/K)^c=H^c/K$ is isomorphic to $H/K$.
There are finitely many of these $C$-conjugates, as $H\in L(G)$, so it follows from the claim that $H_1/K$ is virtually nilpotent.
Now, $C\cap K\in L(G)$ is abelian, hence trivial, and we have
\setlength{\abovedisplayskip}{1pt}
\setlength{\belowdisplayskip}{1pt}
\begin{equation*}
C\cap H_1=(C\cap H_1)/(C\cap K) \cong K(C\cap H_1)/K \leq H_1/K. 
\end{equation*}
Thus  $C\cap H_1\in L(G)$ is virtually nilpotent and therefore trivial.
Note that $C$ is normalized by $H_1$, since $K\trianglelefteq H_1$, whence $[C,H_1]\leq C$. 
As $H_1$ is also normalized by $C$ we have $[C,H_1]\leq C\cap H_1=1$ and therefore $C\leq \C(H_1)$. 
The proof is complete as $K\leq H\leq H_1$ implies that $\C(H_1)\leq \C(H)\leq C$.
\end{proof}

\noindent\textbf{Notation.} For $H,K \in L(G)$, we write $K\lva H$ (respectively, $K\nva H$) if $K\leq H$ (resp.\ $K\trianglelefteq H$)
and $K$ contains the derived group of a finite index subgroup of $H$. 
Note that if $K\nva H$ then $H/K$ is virtually abelian.
\medskip

\autoref*{centralizers} has the following consequences.

\begin{lem}\label{trivial_commuting}  
Let $H_1,H_2\in L(G)$. Then $H_1\cap H_2=1$ if and only if $[H_1,H_2]=1$. 
\end{lem}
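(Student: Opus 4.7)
\emph{Reverse implication.} If $[H_1,H_2] = 1$, then for any $x,y\in H_1\cap H_2$ we have $x\in H_1$, $y\in H_2$, hence $[x,y] = 1$; so $H_1\cap H_2$ is abelian. It lies in $L(G)$ as an intersection of two elements of $L(G)$, and since $L(G)$ contains no nontrivial virtually soluble subgroups, it must be trivial. This is the easy direction.

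\emph{Forward implication.} The plan is to apply \autoref*{centralizers} twice, in the form ``$K\nva H$ in $L(G)$ implies $\C(K) = \C(H)$''. Although the hypothesis $H_1\cap H_2 = 1$ does not on its face produce any commuting, it does so trivially for subgroups that are normal in a common overgroup, so I would first replace the $H_i$ by finite-index subgroups of that shape. Let $N$ be the normal core in $G$ of $\N(H_1)\cap\N(H_2)$; this has finite index because both $H_i\in L(G)$. Set $A_i := H_i\cap N$. Then $A_i\in L(G)$, $A_i\trianglelefteq H_i$ (since $N$ is normal in $G$), $A_i\trianglelefteq N$ (since $N$ normalizes $H_i$), and $H_i/A_i$ is finite, so $A_i\nva H_i$. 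The commutator-of-normals calculation inside $N$ gives $[A_1,A_2]\leq A_1\cap A_2\leq H_1\cap H_2 = 1$. Then \autoref*{centralizers} applied to $A_1\nva H_1$ upgrades $A_2\leq \C(A_1)$ to $A_2\leq \C(H_1)$, and a second application to $A_2\nva H_2$ upgrades $H_1\leq \C(A_2)$ to $H_1\leq \C(H_2)$, which is exactly $[H_1,H_2]=1$.

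The main obstacle is recognising that \autoref*{centralizers} cannot be invoked directly with $K = H_1\cap H_2 = 1$: no nontrivial element of $L(G)$ is virtually nilpotent, so one would be forced to $H = 1$, giving nothing. The fix is to first manufacture normal finite-index subgroups $A_i$ of $H_i$ sharing a common overgroup, which is precisely what the normal core of the joint normalizer provides.
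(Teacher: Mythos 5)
Your proof is correct and follows essentially the same route as the paper: the easy direction via the absence of nontrivial abelian subgroups in $L(G)$, and the forward direction by intersecting with the normal core of $\N(H_1)\cap\N(H_2)$ to obtain mutually normalizing finite-index subgroups $A_i\nva H_i$ with $[A_1,A_2]\leq A_1\cap A_2=1$, then applying \autoref*{centralizers} twice. The only difference is cosmetic (your two-step ``upgrade'' of centralizers is, if anything, stated a little more carefully than the paper's one-line chain of equalities).
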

\begin{proof}
 
  If $[H_1,H_2]=1$ then $H_1\cap H_2\in L(G)$ is abelian and therefore trivial.
 
 For the converse, let $N$ be the normal core of the intersection $\N(H_1)\cap\N(H_2)$.
 Thus $N\nf G$ normalizes $H_1$ and $H_2$.
 For $i=1,2$, let $K_i=H_i\cap N$. 
 Then $K_i\nf H_i$ and  $K_i\in L(G)$.
 Now, since $K_i\trianglelefteq K_1K_2$, we have 
 $[K_1,K_2]\leq K_1\cap K_2\leq H_1\cap H_2=1.$
 Therefore, applying  \autoref*{centralizers} to $K_i\lf H_i$,
 we obtain $\C(H_1)=\C(K_1)\leq \C(K_2)=\C(H_2)$, and vice-versa, so $[H_1,H_2]=1$. 
\end{proof}

 

\begin{lem}\label{pseudo-complements}
 For every $H\in L(G)$ we have $\langle H, \C(H)\rangle= H\times \C(H)\lva G$.
\end{lem}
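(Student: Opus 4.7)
My plan is to treat the two assertions, the internal direct product decomposition and the $\lva G$ condition, in turn.

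For the direct product, I first note that $\C(H)$ lies in $L(G)$: any element normalizing $H$ also normalizes $\C(H)$, so $\N(\C(H))$ contains the finite-index subgroup $\N(H)$. Since $H$ and $\C(H)$ commute by the definition of centralizer, \autoref{trivial_commuting} forces $H \cap \C(H) = 1$. Combined with commutativity, this gives $\langle H, \C(H)\rangle = H \cdot \C(H) = H \times \C(H)$ as an internal direct product; denote this subgroup by $K$.

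For $K \lva G$, the case $H = 1$ is immediate since then $\C(H) = G$ and $K = G$. Assume henceforth $H \neq 1$, so $K \neq 1$. Let $N := \N(H)$, which has finite index in $G$. By the observation above $N$ normalizes both $H$ and $\C(H)$, and hence also $K$, so $K \trianglelefteq N$. Now $N$, as a finite-index subgroup of the branch group $G$, is itself a branch group (a standard fact). I would therefore apply \autoref{slava} to $N$ and the nontrivial normal subgroup $K$, producing an integer $m$ with $\rst_N(m)' \leq K$. Since $\rst_N(m)$ has finite index in $N$ and $N$ has finite index in $G$, the subgroup $G_0 := \rst_N(m)$ has finite index in $G$ and satisfies $G_0' \leq K$, so $K \lva G$ as required.

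The main obstacle is the reduction in the second part to \autoref{slava}; this hinges on knowing that finite-index subgroups of a branch group are themselves branch groups, so that the theorem may be applied with $N$ in the role of $G$. Once that fact is invoked, the rest of the argument is a direct application of the preceding results, with no further analysis of the quotient $N/K$ needed.
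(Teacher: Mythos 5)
Your first paragraph is fine and agrees with the paper: $[H,\C(H)]=1$ together with \autoref*{trivial_commuting} gives $H\cap\C(H)=1$, hence the internal direct product, and $\C(H)\in L(G)$ as you say. The gap is in the second half. You reduce everything to the assertion that $\N(H)$, being of finite index in the branch group $G$, is itself a branch group, so that \autoref*{slava} can be applied with $\N(H)$ in place of $G$. This is not a standard fact you can simply invoke: a finite-index subgroup need not act level-transitively on the original tree, and producing a new tree on which it acts with finite-index rigid level stabilizers is a genuinely delicate matter; nothing in this paper or in the results quoted up to this point supplies it. As stated, \autoref*{slava} applies only to non-trivial subgroups that are normal in $G$ itself, whereas $K=H\times\C(H)$ is in general only normal in $\N(H)$. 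So the step you yourself flag as the main obstacle is a real hole, and it is harder to fill than the lemma it is meant to prove.

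The paper avoids this issue entirely. If the normal core of $H$ in $G$ is non-trivial, then \autoref*{slava} applied to that core gives a subgroup $\nva G$ inside $H$, hence $H\lva G$ and $H\times\C(H)\lva G$. If the core is trivial, the paper takes $V$ to be a non-trivial intersection of a maximal number of conjugates of $H$ with $V\leq H$ (such $V$ exists because $H$ has finitely many conjugates); by maximality, any conjugate $W$ of $V$ not contained in $H$ satisfies $W\cap H=1$, whence $W\leq\C(H)$ by \autoref*{trivial_commuting}. Thus every conjugate of $V$ lies in $H\times\C(H)$, so $V^G\leq H\times\C(H)$, and $V^G$ is a non-trivial normal subgroup of $G$ to which \autoref*{slava} does apply, giving $V^G\nva G$ and therefore $H\times\C(H)\lva G$. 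To repair your argument you would need either to prove that $\N(H)$ is a branch group or, better, to replace that step with an argument of this kind that locates a non-trivial subgroup normal in all of $G$ inside $H\times\C(H)$.
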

\begin{proof}
 Write $C=\C(H)$ and note that $\langle  H, C \rangle=H\times C$ by \autoref*{trivial_commuting}.
 If the normal core $N$ of $H$ is non-trivial then $N\nva G$, so $H\lva G$ and hence $H\times C\lva G$.
 Suppose then that $N=1$ and 
 let $V\in L(G)$ be the intersection of a maximal number of conjugates of $H$ such that $1<V\leq H$. 
 If $W$ is a conjugate of $V$ which is not contained in $H$ then
 $1\leq W\cap H\leq H$ is the intersection of one more conjugate of $H$ than $V$.
 Therefore $W\cap H=1$, by the choice of $V$ and $W\leq C$ by  \autoref*{trivial_commuting}.
 This implies that $H\times C$ contains all conjugates of $V$; in particular, it contains their product $V^G \nva G$.
 Thus $H\times C\lva G$, as required.
 \end{proof}

\begin{lem}\label{vaiffcentralizers}
 Let $H, K\in L(G)$. The following are equivalent:
 \begin{enumerate}[leftmargin=2em,label=\textup{(\roman*)}]
  \item\label{H=K} $H\cap K\lva H, K;$
  \item\label{CH=CK} $\C(H)=\C(K);$
  \item\label{commonD} there exists $D\in L(G)$ such that $H\times D\lva G$ and $K\times D\lva G$.
 \end{enumerate}
\end{lem}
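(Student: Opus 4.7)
The plan is to prove the two equivalences (i)$\Leftrightarrow$(ii) and (ii)$\Leftrightarrow$(iii) separately. Throughout, \autoref{centralizers}, \autoref{pseudo-complements} and \autoref{trivial_commuting} will do most of the work: the first lets us pass between subgroups with virtually nilpotent quotients, the second provides canonical virtual complements, and the third converts trivial intersections into commutativity.

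For (i)$\Rightarrow$(ii), take a finite-index subgroup $H_0$ of $H$ with $H_0' \leq H \cap K$; after replacing $H_0$ by its normal core in $H$ we may assume $H_0 \nf H$. \autoref{centralizers} applied to $H_0' \trianglelefteq H$ (the quotient is virtually abelian) yields $\C(H_0') = \C(H)$, and since $H_0' \leq H \cap K \leq K$ this forces $\C(K) \subseteq \C(H)$; the symmetric argument gives equality. For the converse (ii)$\Rightarrow$(i), set $C = \C(H) = \C(K)$. By \autoref{pseudo-complements}, $H \times C \lva G$ and $K \times C \lva G$. The crucial identity is
\[ (H \times C) \cap (K \times C) = (H \cap K) \times C, \]
which reduces to $\langle H, K\rangle \cap C \leq Z(\langle H, K\rangle) = 1$ (the latter is in $L(G)$ and abelian, hence trivial). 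Choosing $G_0 \nf G$ of finite index with $G_0'$ contained in both $H \times C$ and $K \times C$, the subgroup $H_0 := H \cap G_0 \lf H$ then satisfies $H_0' \leq G_0' \cap H \leq H \cap ((H \cap K) \times C) = H \cap K$, the final equality because $H \cap C = Z(H) = 1$. Thus $H \cap K \lva H$, and symmetrically $H \cap K \lva K$.

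For (ii)$\Rightarrow$(iii) one simply takes $D = \C(H) = \C(K)$ and invokes \autoref{pseudo-complements}. The substantive implication is (iii)$\Rightarrow$(ii). My plan here is first to show $\C(H \times D) = 1$: picking $G_0 \nf G$ with $G_0' \leq H \times D$, one argues $\C(G_0) = 1$ because $Z(G_0) \in L(G)$ is abelian and hence trivial, so $\C(G_0) \cap G_0 = 1$, making $\C(G_0)$ a finite normal subgroup of $G$, which is trivial by \autoref{slava}. \autoref{centralizers} then upgrades this to $\C(G_0') = 1$, whence $\C(H \times D) = \C(H) \cap \C(D) = 1$. \autoref{trivial_commuting} turns this into $[\C(H), \C(D)] = 1$, so $\C(D) \leq \C(\C(H))$; combined with $D \leq \C(H)$ (which gives the reverse inclusion), we get $\C(D) = \C(\C(H))$, and symmetrically $\C(D) = \C(\C(K))$. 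Applying $\C$ to both sides of $\C(\C(H)) = \C(\C(K))$ and using the general identity $\C(\C(\C(X))) = \C(X)$ finally delivers $\C(H) = \C(K)$.

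The trickiest step I anticipate is (iii)$\Rightarrow$(ii): proving $\C(H \times D) = 1$ draws on the full branch-group hypothesis through \autoref{slava}, and closing by a third application of $\C$ to collapse $\C(\C(H)) = \C(\C(K))$ into $\C(H) = \C(K)$ is a slightly unusual move. The remaining implications are more direct applications of the cited lemmas and proposition.
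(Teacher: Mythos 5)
Your proof follows a genuinely different route from the paper's. The paper closes the single cycle \ref{H=K}$\Rightarrow$\ref{CH=CK}$\Rightarrow$\ref{commonD}$\Rightarrow$\ref{H=K}: the second implication is immediate from \autoref*{pseudo-complements} with $D=\C(H)$, and the third is a two-line computation, $(G_0\cap K)'\leq (H\times D)\cap K=H\cap K$, so only \ref{H=K}$\Rightarrow$\ref{CH=CK} requires real work. You instead prove all four implications of \ref{H=K}$\Leftrightarrow$\ref{CH=CK} and \ref{CH=CK}$\Leftrightarrow$\ref{commonD}. Your \ref{CH=CK}$\Rightarrow$\ref{H=K} via the identity $(H\times C)\cap(K\times C)=(H\cap K)\times C$ is correct (the reduction to $\langle H,K\rangle\cap C=Z(\langle H,K\rangle)=1$ works), and your direct \ref{commonD}$\Rightarrow$\ref{CH=CK} -- showing $\C(H\times D)=1$, deducing $\C(D)=\C(\C(H))=\C(\C(K))$, and collapsing with $\C(\C(\C(X)))=\C(X)$ -- is a valid and rather more conceptual argument than the paper's, at the cost of length. (Minor remark: to kill the finite normal subgroup $\C(G_0)$ you do not need \autoref*{slava}; it lies in $L(G)$ and is virtually trivial, hence trivial by the quoted fact that $L(G)$ contains no non-trivial virtually soluble subgroups.)

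There is, however, one genuine slip, in \ref{H=K}$\Rightarrow$\ref{CH=CK}. After replacing $H_0$ by its normal core in $H$, you apply \autoref*{centralizers} to the pair $H_0'\trianglelefteq H$. But \autoref*{centralizers} requires \emph{both} subgroups to lie in $L(G)$, and $H_0'$ need not: $H_0$ is merely a finite-index subgroup of $H$, and when $H$ has infinite index in $G$ (e.g.\ $H=\rst_G(v)$ for a deep vertex $v$) neither $H_0$, nor its core in $H$, nor its derived subgroup need have finitely many $G$-conjugates. The paper sidesteps this by applying \autoref*{centralizers} not to $H_0'$ but to the normal core $L$ of $H\cap K$ in $H$: this is a finite intersection of $H$-conjugates of $H\cap K\in L(G)$, hence lies in $L(G)$; it contains $H_0'$ (so $L\nva H$); and it is contained in $K$, which is all you need to conclude $\C(K)\leq\C(L)=\C(H)$. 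With that substitution your argument goes through intact.
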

\begin{proof}
 If \ref*{H=K} holds, we have $A'\leq H\cap K$ for some $A\lf H$. 
 Let $N$ and $L$ be, respectively,  the normal cores of $A$ and $H\cap K$ in $H$. 
 Then $N\nf H$ and $N'\trianglelefteq L\trianglelefteq H$; that is $L\nva H$. 
 Thus $\C(L)=\C(H)$ by \autoref*{centralizers}, but then $L\leq K\leq H$ implies that $\C(H\cap K)=\C(L)=\C(H)$.
 Repeating the procedure with $H$ replaced by $K$ yields $\C(H\cap K)=\C(K)=\C(H)$.
 This immediately implies \ref*{commonD} by \autoref*{pseudo-complements}.

 Suppose that \ref*{commonD} is true; so $G_0'\leq H\times D$ for some $G_0\lf G$.
  Then $G_0\cap K\lf K$ and, since $D\cap K=1$, we have
 $ (G_0\cap K)'\leq (H\times D)\cap K = H\cap K;$
 that is, $H\cap K\lva K$. 
 The same argument with $H$ and $K$ swapped gives $H\cap K\lva H$.  
\end{proof}

\subsection*{The structure lattice}
For $H,K \in L(G)$, write $K\sim H$ if any of the equivalent conditions of \autoref*{vaiffcentralizers} holds.
It is immediate that $\sim$ is an equivalence relation on $L(G)$.
We show that $\sim$ is a congruence on $L(G)$.
\begin{prop}
  Let $H_1,H_2,K_1,K_2\in L(G)$ with $H_1\sim H_2$ and $K_1\sim K_2$.
  Then $H_1\cap K_1 \sim H_2 \cap K_2$ and $\langle H_1, K_1\rangle \sim \langle H_2,K_2\rangle$.
\end{prop}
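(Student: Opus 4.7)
\medskip
\noindent\textbf{Proof proposal.}
The plan is to treat the two assertions separately, using different characterizations from \autoref*{vaiffcentralizers}.

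For the join, the natural tool is the centralizer characterization. Since $\C(\langle H,K\rangle)=\C(H)\cap\C(K)$ holds for any subgroups, the hypotheses $\C(H_1)=\C(H_2)$ and $\C(K_1)=\C(K_2)$ immediately give $\C(\langle H_1,K_1\rangle)=\C(\langle H_2,K_2\rangle)$, whence $\langle H_1,K_1\rangle\sim\langle H_2,K_2\rangle$.

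For the meet I would work with the first characterization of \autoref*{vaiffcentralizers}. The key step is a monotonicity claim: if $A,B\in L(G)$ with $A\leq B$ and $A\lva B$, then $A\cap K\sim B\cap K$ for every $K\in L(G)$. To prove it, pick $F\lf B$ with $F'\leq A$; then $F\cap K\lf B\cap K$ and $(F\cap K)'\leq F'\cap K\leq A\cap K$, so $A\cap K\lva B\cap K$. Together with the trivial $A\cap K\lva A\cap K$ this gives $A\cap K\sim B\cap K$.

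To conclude, $H_1\sim H_2$ implies both $H_1\cap H_2\lva H_1$ and $H_1\cap H_2\lva H_2$, so two applications of the monotonicity claim plus transitivity yield $H_1\cap K\sim H_2\cap K$ for every $K\in L(G)$. Applying this to $H_1\sim H_2$ with $K=K_1$, and then symmetrically to $K_1\sim K_2$ with the ambient parameter replaced by $H_2$, produces the chain
\[ H_1\cap K_1\sim H_2\cap K_1\sim H_2\cap K_2, \]
as required. The only real obstacle is that none of the three characterizations of $\sim$ directly compares non-comparable members of $L(G)$; the detour through $H_1\cap H_2$ and the monotonicity claim is what bridges this gap.
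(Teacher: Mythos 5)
Your proof is correct. For the intersection it is essentially the paper's argument in different packaging: the paper checks directly that $H_1\cap H_2\cap K_1\cap K_2$ is $\lva$ in both $H_1\cap K_1$ and $H_2\cap K_2$, and the computation it relies on --- take $F\lf H_1$ with $F'\leq H_1\cap H_2$ and intersect with $K_i$ --- is precisely your monotonicity claim; you merely route the conclusion through transitivity of $\sim$ instead of forming the fourfold intersection at once. For the join, however, you take a genuinely different and shorter path. The paper verifies condition (iii) of \autoref*{vaiffcentralizers}: it sets $M=\langle H_1\cap H_2,\,K_1\cap K_2\rangle$, uses \autoref*{centralizers} and \autoref*{trivial_commuting} to get $\C(M)\cap H_i=\C(M)\cap K_i=1$, and then invokes \autoref*{pseudo-complements} to exhibit $D=\C(M)$ as a common complement witnessing $\langle H_i,K_i\rangle\times D\lva G$ for $i=1,2$. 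You instead observe the elementary identity $\C(\langle H,K\rangle)=\C(H)\cap\C(K)$, so condition (ii) for the joins follows at once from condition (ii) for the pairs. This is a clean simplification; the only thing the paper's longer route buys is an explicit common complement $D$, which is not needed elsewhere. The one hypothesis you should make explicit is that $\langle H_1,K_1\rangle\in L(G)$, so that the equivalences of \autoref*{vaiffcentralizers} apply to it --- but the paper records this closure property of $L(G)$ when the lattice is introduced, so there is no gap.
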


\begin{proof}

 For $i=1,2$ we have $H_1\cap H_2\cap K_i\lva H_i\cap K_i$ and $K_1\cap K_2\cap H_i\lva H_i\cap K_i$ so that
 $(H_1\cap H_2\cap K_i)\cap (K_1\cap K_2\cap H_i)\lva H_i\cap K_i.$
 Thus $H_1\cap H_2 \sim K_1\cap K_2$.
 
 To show that the join operation is respected, write $M:=\langle H_1\cap H_2, K_1\cap K_2\rangle \in L(G)$.
 Then $\C(M)$ centralizes $H_i$ and $K_i$ for $i=1,2$ so that \autoref*{trivial_commuting} yields $\C(M)\cap H_i=1$ and $\C(M)\cap K_i=1$.
 Therefore $\langle \C(M), H_i, K_i\rangle = \C(M)\times \langle H_i, K_i\rangle \geq \C(M)\times M$ and, 
 since $\C(M)\times M\lva G$ (by \autoref*{pseudo-complements}), 
 we have  $\C(M)\times \langle H_i, K_i\rangle \lva G$ for $i=1,2$.
 \end{proof}

By the above, the join and meet of two equivalence classes $[H],[K]$ of elements $H,K\in L(G)$ is well defined by
$$[H]\vee[K]=[\langle H, K\rangle] \quad \text{and} \quad [H]\wedge[K]=[H\cup K],$$
and the quotient $\mathcal{L}=L(G)/\sim$ is again a lattice with respect to these operations and the natural partial order inherited from $L(G)$:
$$[K]\leq [H] \quad \text{if and only if} \quad [K\cap H]=[K].$$

This quotient  $\mathcal{L}$ is the \emph{structure lattice} of $G$.

Note that $[G]$ and $[1]=\{1\}$ are, respectively, the greatest and least elements of $\mathcal{L}$.
Observe also that $[H]^g=[H^g]$ for each $H\in L(G)$ and $g\in G$.
Thus the action of $G$ on its subgroups by conjugation induces a well-defined action on $\mathcal{L}$.
It is shown in \cite{Hardy} that $\mathcal{L}$ is a Boolean lattice (it is uniquely complemented and distributive), but we do not require this fact here. 

\subsection*{The structure graph}
An element $B$ of $L(G)$ is \emph{basal} if $\langle B^G\rangle$ is the direct product  of the finitely many conjugates of $B$; 
in particular, if $B^g\neq B$ then $B^g\cap B=1$.
Examples of basal subgroups include $\rst_G(v)$ for every vertex $v$ of a tree on which $G$ acts as a branch group.

Basal subgroups have the following useful properties.
\begin{lem}[\cite{wilsonNewhorizons}]\label{basalprops}Let $B, B_1, B_2$ be basal subgroups of $G$. Then
 \begin{enumerate}[leftmargin=2em,label=\textup{(\roman*)}]
   \item $B_1\cap B_2$ is basal;
   \item if $[B_1]\leq [B_2]$ then  $\N(B_1)\leq \N(B_2)$;
   \item $\N(B)$ is the stabilizer of $[B]$ under the action of $G$ by conjugation;
   \item $\bigcap (\N(B)\mid B \text{ is basal})=1$.
 \end{enumerate}
\end{lem}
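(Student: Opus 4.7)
I plan to prove the four parts in the order stated; part (ii) will be the substantive step. For (i), the idea is to list the $G$-conjugates of $B_1\cap B_2$ as the finitely many subgroups $B_1^g\cap B_2^g$ and check directly that distinct ones intersect trivially and commute. If $B_1^g\cap B_2^g \neq B_1^h\cap B_2^h$, then $B_1^g\neq B_1^h$ or $B_2^g\neq B_2^h$; in either case the basal property of the relevant $B_i$, combined with \autoref*{trivial_commuting}, delivers trivial intersection and commutation for the enclosing conjugates, and both properties descend to the intersections. Hence $\langle (B_1\cap B_2)^G\rangle$ is their direct product, so $B_1\cap B_2$ is basal.

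For (ii), I would take $g\in\N(B_1)$ and argue that if $g$ failed to lie in $\N(B_2)$ then $B_1$ would be forced to be trivial. The key computation, using $g$-invariance of $[B_1]$ and the lattice action of $G$ on $\mathcal{L}$, is
\[
[B_1\cap B_2^g] = [B_1]\wedge[B_2^g] = [B_1]^g\wedge[B_2]^g = [B_1\cap B_2]^g = [B_1],
\]
where the last equality uses $[B_1]\leq[B_2]$. If $g\notin\N(B_2)$ then $B_2\cap B_2^g=1$ by the basal property of $B_2$, so $(B_1\cap B_2)\cap(B_1\cap B_2^g)=1$. Since each factor has class $[B_1]$, the meet has class $[B_1]\wedge[B_1]=[B_1]$ as well; this forces $[B_1]=[1]$ and hence $B_1=1$. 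The main obstacle is recognising that the hypothesis $[B_1]\leq[B_2]$ has to be used through its $g$-invariance, rather than trying to compare $B_1$ and $B_2$ directly as subgroups.

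For (iii), the containment $\N(B)\subseteq \St_G([B])$ is immediate from the definition of the $G$-action on $\mathcal{L}$. Conversely, if $[B^g]=[B]$ then $B^g\cap B\lva B$; but $B$ basal gives $B^g=B$ or $B^g\cap B=1$, and the latter would force $B$ to be virtually abelian, hence trivial in $L(G)$. So every nontrivial basal $B$ satisfies $B^g=B$ whenever $g$ stabilises $[B]$.

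For (iv), I would use the basal subgroups $\rst_G(v)$ arising from a branch action on some tree $T$. If $g\notin\St_G(v)$ then $\rst_G(v)^g=\rst_G(v^g)$ acts trivially on $T_v$ whereas $\rst_G(v)$ does not (the latter is nontrivial for every sufficiently deep $v$, by the branch condition), so $\N(\rst_G(v))\leq\St_G(v)$. Intersecting over all vertices then yields
\[
\bigcap_B \N(B) \leq \bigcap_v \N(\rst_G(v)) \leq \bigcap_v \St_G(v) = 1
\]
by faithfulness of the action on $T$.
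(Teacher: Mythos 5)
Your proposal is correct and follows essentially the same route as the paper for all four parts: conjugating the intersection and reducing to the basal property of a single $B_i$ for (i), pushing the hypothesis $[B_1]\leq[B_2]$ through the $G$-action on $\mathcal{L}$ and invoking basality of $B_2$ for (ii) and (iii), and reducing (iv) to $\N(\rst_G(v))\leq\St_G(v)$ together with faithfulness of the branch action. The only (harmless) differences are presentational: you phrase (ii) as a contradiction where the paper argues directly, and your explicit commutation observation in (i) in fact justifies the full direct-product condition slightly more completely than the paper's one-line remark.
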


\begin{proof}
 \begin{enumerate}[fullwidth,itemsep=0pt,label=(\roman*)]
  \item Suppose that $(B_1\cap B_2)^g=B_1^g\cap B_2^g\neq B_1\cap B_2$ for some $g\in G$.
  Then either $B_1^g\neq B_1$ or $B_2^g\neq B_2$. 
  In each case we have $(B_1\cap B_2)^g\cap (B_1\cap B_2)=1$.
  \item Let $g\in \N(B_1)$. 
  Then $1\neq [B_1]^g=[B_1]\leq [B_2]^g\wedge[B_2]=[B_2^g\cap B_2]$ so $B_2^g=B_2$ and $g\in \N(B_2)$, as required.
  \item This follows from the argument in the previous part.
  \item Note that $\N([\rst_G(v)])=\St_G(v)$ for every $v\in T$, where $T$ is a tree on which $G$ acts as a branch group.
  The claim follows from the observation that every $\rst_G(v)$ is basal and $\bigcap(\St_G(v) \mid v\in T)=1$.
 \end{enumerate}
\end{proof}

The \emph{structure graph} $\mathcal{B}$ of $G$ has as vertices all non-trivial elements $[B]\in\mathcal{L}$ such that $B$ is basal.
Two elements $[A]\leq [B]$ of $\mathcal{B}$ are joined by an edge if $[A]$ is maximal subject to this inequality.
Again, the conjugation action of $G$ on its basal subgroups induces an action on $\mathcal{B}$ by graph automorphisms.

\section{The congruence and branch topologies}

\noindent\textbf{Notation.} Since in this section we will deal with different branch actions of the same group $G$, 
we will write $\St_{\rho}(n)$ and $\rst_{\rho}(v)$ for the stabilizer of the $n$th layer and the rigid stabilizer of vertex $v$
\emph{with respect to a given branch action $\rho:G\rightarrow \Aut(T)$}. 
We shall omit the subscript when there is no risk of confusion.

\begin{prop}[\cite{Hardy}]\label{prop}
 If $G$ acts as a branch group on a tree $T$ then there is an order-preserving $G$-equivariant embedding 
 $\phi: T \rightarrow \mathcal{B}$ defined by 
 $\phi: v\mapsto [\rst(v)]$.
\end{prop}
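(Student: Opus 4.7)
My plan is to verify the four required properties of $\phi$ in order: that it is well-defined (lands in $\mathcal{B}$), $G$-equivariant, order-preserving, and injective. Only the last will need real work; the rest are short consequences of the branch axioms and the earlier lemmas.

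First I would check well-definedness. For any $v\in V_n$, transitivity of $G$ on $V_n$ and finiteness of $V_n$ imply that $\rst(v)$ has finitely many $G$-conjugates (namely the $\rst(u)$ for $u\in V_n$), so $\rst(v)\in L(G)$; these conjugates generate $\rst(n)$ and by definition of a branch action their product is direct, so $\rst(v)$ is basal. Finally $\rst(v)\neq 1$: $\rst(n)$ has finite index in the infinite group $G$ (a branch group is just non-(virtually abelian), hence infinite), and all the $\rst(u)$ for $u\in V_n$ are conjugate, so none can be trivial. Thus $[\rst(v)]$ really is a vertex of $\mathcal{B}$. Equivariance is immediate since $\rst(g\cdot v)=g\,\rst(v)\,g^{-1}$ and $[H]^g=[H^g]$; and order preservation follows from the observation that if $w$ lies in $T_v$ then $\rst(w)\leq\rst(v)$, which forces $[\rst(w)]\leq[\rst(v)]$ in $\mathcal{L}$.

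The main step is injectivity, and this is where I expect the real work. I would split into two cases depending on whether $v$ and $w$ are comparable in $T$. Suppose first that $v,w$ are incomparable, so $T_v\cap T_w=\emptyset$. Then $\rst(v)$ and $\rst(w)$ have disjoint supports in $T$, so $\rst(w)$ centralizes $\rst(v)$; that is, $\rst(w)\leq\C(\rst(v))$. If $[\rst(v)]=[\rst(w)]$ then by \autoref*{vaiffcentralizers} we have $\C(\rst(v))=\C(\rst(w))$, so $\rst(w)\leq\C(\rst(w))$, meaning $\rst(w)\in L(G)$ is abelian, hence trivial — contradicting the previous paragraph.

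For the comparable case, say $v$ is a proper ancestor of $w$, I would exploit the fact that $m_n\geq 2$ to pick a descendant $w'$ of $v$ at the same depth as $w$ but lying in a different child subtree of $v$, so that $T_{w'}\cap T_w=\emptyset$ while $T_{w'}\subseteq T_v$. Then $\rst(w')\leq\rst(v)$ and $\rst(w')$ centralizes $\rst(w)$, so $\rst(w')\leq\C(\rst(w))$. Again if $[\rst(v)]=[\rst(w)]$, then $\C(\rst(w))=\C(\rst(v))$, and \autoref*{pseudo-complements} gives $\rst(v)\cap\C(\rst(v))=1$; thus $\rst(w')\leq\rst(v)\cap\C(\rst(v))=1$, contradicting nontriviality of $\rst(w')$. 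Hence $\phi$ is injective, completing the proof. The only real obstacle is setting up this centralizer argument cleanly, since the earlier lemmas do all the heavy lifting once the right sibling vertex $w'$ is chosen.
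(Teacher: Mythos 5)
Your proof is correct and follows essentially the same route as the paper's: injectivity is split into the incomparable and comparable cases, and the comparable case exploits a sibling subtree inside $T_v$ guaranteed by $m_n\geq 2$. The only (immaterial) difference is that you derive the contradiction via the centralizer condition \autoref*{vaiffcentralizers}\ref*{CH=CK}, forcing $\rst(w')=1$, whereas the paper uses condition \ref*{H=K} to conclude that the sibling's rigid stabilizer would be virtually abelian.
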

\begin{proof}
 We have already seen that $\phi$ is $G$-equivariant as
 $$\phi(v)^g=[\rst(v)]^g=[\rst(v)^g]=[\rst(v^g)]=\phi(v^g).$$ 
 That $\phi$ is order-preserving is also clear since $v$ is a descendant of $w$ if and only if $\rst(v)\leq \rst(w)$.

 To see that $\phi$ is injective, let $\rst(v)\sim\rst(w)$.
 If $v$ and $w$ were incomparable vertices, then $\rst(v)\cap \rst(w)=1$ would imply that  $\rst(v)$ and $\rst(w)$ are virtually abelian, a contradiction.
 Thus $v$ and $w$ are comparable; say $v\leq w$.
  Suppose for a contradiction that $v\neq w$;
 then there is a vertex $v_2\neq v$ in the same layer as $v$ such that $v_2\leq w$ (a `sibling' of $v$).
 Since $v,v_2 \leq w$ we have $\rst(v)\times \rst(v_2)\leq \rst(w)$ and $\rst(v)\lva\rst(w)$.
 It then follows that $\rst(v)\lva (\rst(v)\times \rst(v_2))$ and that $\rst(v_2)$ is virtually abelian. 
 This gives the desired contradiction, so $v=w$, as required. 
\end{proof}

\begin{prop}[\cite{Hardy}]\label{lemma}
Let $G$ act as a branch group on $T$ and let $1\neq B\in L(G)$. 
Then there exists some $v\in T$ such that $[\rst(v)]\leq [B]$.
\end{prop}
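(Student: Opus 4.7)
The plan is to apply \autoref*{slava} to the normal closure of $B$, and then reduce from domination by the whole normal closure to domination by a single conjugate by descending into the tree.

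\emph{Reduction to the normal closure.} Let $\tilde B:=B^G$ be the normal closure of $B$. Since $B$ has finitely many conjugates in $G$, $\tilde B\in L(G)$, and $\tilde B\neq 1$ because $B\neq 1$. By \autoref*{slava} applied to the non-trivial normal subgroup $\tilde B$, there is some $n$ with $\rst_G(n)'\leq \tilde B$. For every $v\in V_n$ we then have $\rst(v)'\leq \rst(v)\cap \tilde B$, so $\rst(v)\cap \tilde B\,\nva\,\rst(v)$; equivalently, $[\rst(v)]\leq[\tilde B]$ in $\mathcal L$.

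\emph{Reduction to a single conjugate.} Write $B=B_1,\ldots,B_k$ for the $G$-conjugates of $B$, so that $[\tilde B]=[B_1]\vee\cdots\vee[B_k]$ in $\mathcal L$. Using \autoref*{pseudo-complements} and \autoref*{vaiffcentralizers} (which together yield the Boolean-like structure of $\mathcal L$), one can decompose
\[
[\rst(v)]=\bigvee_{i=1}^{k}\bigl([\rst(v)]\wedge[B_i]\bigr)
\]
for every $v\in V_n$. If at some level $m\geq n$ there exist $v\in V_m$ and an index $i$ with $[\rst(v)]\leq[B_i]$, then taking $g\in G$ with $B^g=B_i$, $G$-equivariance of the action on $\mathcal B$ (\autoref*{prop}) gives $[\rst(v^{g^{-1}})]\leq[B]$, and we are done.

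\emph{Descent --- the main obstacle.} To produce such a triple $(m,v,i)$ I would descend a branch of $T$. The identity $\rst_G(m)=\prod_{w\in V_m}\rst(w)$ refines the join decomposition of $[\rst_G(m)]$ into an increasing number of basal atoms as $m\to\infty$, while the number $k$ of conjugates stays fixed. Along a descending ray $v_n\geq v_{n+1}\geq\cdots$, for each fixed $i$ the sequence $[\rst(v_m)]\wedge[B_i]$ is a descending chain in $\mathcal L$; since the $k$ pieces join to $[\rst(v_m)]\neq 0$, a pigeonhole argument should force one of them to equal $[\rst(v_m)]$ for some sufficiently large $m$.

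The delicate point is ruling out that $[\rst(v_m)]$ continues to split properly across two or more of the $[B_i]$ indefinitely. I expect this to use \autoref*{basalprops}(i),(iv) --- intersections of basal subgroups are basal, and the family of normalizers of basal subgroups has trivial intersection --- together with the fact (from the remark after the definition of $L(G)$) that $L(G)$ contains no non-trivial virtually soluble subgroups; a persistent non-trivial splitting would eventually force a non-trivial element of $L(G)$ to centralize every $\rst_G(m)$, contradicting both the triviality of $\bigcap_m \N(\rst_G(m))$-type statements and the absence of virtually soluble elements of $L(G)$.
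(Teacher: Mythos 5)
There is a genuine gap, and it sits exactly where you flagged it. Your first reduction is fine: applying \autoref*{slava} to $B^G$ gives $\rst_G(n)'\leq B^G$ and hence $[\rst(v)]\leq[B^G]$ for all $v\in V_n$. But this only dominates the \emph{normal closure}, and passing from $[\rst(v)]\leq[B_1]\vee\cdots\vee[B_k]$ to $[\rst(w)]\leq[B_i]$ for a single conjugate is the entire content of the proposition; your proposal does not prove it. Three specific problems: (1) the decomposition $[\rst(v)]=\bigvee_i([\rst(v)]\wedge[B_i])$ is an instance of distributivity of $\mathcal{L}$, which is a theorem of \cite{Hardy} that this paper explicitly declines to use, and which does not follow from \autoref*{pseudo-complements} and \autoref*{vaiffcentralizers} alone; (2) even granting it, producing a vertex $w$ with $[\rst(w)]\leq[\rst(v)\cap B_i]$ is precisely the statement of \autoref*{lemma} applied to the non-trivial element $\rst(v)\cap B_i$ of $L(G)$ --- so the descent is circular; (3) the pigeonhole/descending-chain argument has no termination mechanism, since $\mathcal{L}$ satisfies no descending chain condition and nothing forces the chain $[\rst(v_m)]\wedge[B_i]$ to stabilise at $[\rst(v_m)]$ rather than shrinking alongside it. The closing paragraph ("I expect this to use\dots") is a statement of hope, not an argument.

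The paper's proof avoids the normal closure entirely and is a direct commutator computation in the spirit of the proof of \autoref*{slava} itself, but localised at $B$. Pick $1\neq g\in B$ moving a vertex $v$, set $w=v^g$, and let $N$ be the normal core of $\N(B)$, so $\rst(v)\cap N\nf\rst(v)$. For $h,k\in\rst(v)\cap N$ one has $h^g\in\rst(w)$, which commutes with $k\in\rst(v)$, and this yields the identity $[h,k]=[[h,g],k]$; since $g\in B$ and $h,k$ normalize $B$, the right-hand side lies in $B$. Hence $(\rst(v)\cap N)'\leq B$, and since $(\rst(v)\cap N)'$ is the derived group of a finite index subgroup of $\rst(v)$ we get $(\rst(v)\cap N)'\sim\rst(v)$, i.e.\ $[\rst(v)]\leq[B]$. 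If you want to salvage your outline, you would need an independent argument for the single-conjugate reduction; as it stands, the short commutator trick is doing work that the lattice formalism cannot replace.
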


\begin{proof}
 Since $B$ is non-trivial it contains a non-trivial element $g$ which moves some vertex $v$. 
 Let $w=v^g$ and $N$ be the normal core of the normalizer of $B$.
 Then $\rst(v)\cap N\nf \rst(v)$.
 Let $h,k\in \rst(v)\cap N$. 
 Note that $h^g\in \rst(v^g)=\rst(w)$; so $h^g$ and $k$ commute (because $[\rst(w),\rst(v)]=1$).
 Then  $[[h,g],k]\in B$ since $h,k$ normalize $B$ and we have
 $$[h,k]=h^{-1}k^{-1}hk=h^{-1}h^g k^{-1}(h^{-1})^g hk=[h^{-1}h^g,k]=[[h,g],k].$$
 Thus $(\rst(v)\cap N)'\leq B$ and $(\rst(v)\cap N)'\sim \rst(v)$ yields the result.
\end{proof}

The above imply that the structure graph ``contains'' all possible branch actions of $G$.
It is then reasonable to define the congruence and branch topologies with respect to the action of $G$ on $\mathcal{B}$.

When the structure graph is itself a tree, 
then  $G$ acts on it with a branch action 
and all other trees on which $G$ acts as a branch group are obtained from the structure graph by ``deleting layers''.
This was proved in \cite{geomded}, where a sufficient condition for the structure graph to be a tree is given.
Examples of branch groups satisfying that condition include Grigorchuk's first group, the Gupta--Sidki $p$-groups and the Hanoi tower group.
A necessary and sufficient condition for the structure graph to be a tree is given in \cite{Hardy}.

\subsection*{The congruence topology}

In order to define the congruence topology with respect to the action of $G$ on $\mathcal{B}$ we must find analogues of the level stabilizers $\St_G(n)$.
As $G$ acts level-transitively, the obvious analogue in $\mathcal{B}$ of a layer is an orbit $b^G$ where $b=[B]\in\mathcal{B}$ and $B$ is basal.
Recall from \autoref*{basalprops} that $\St_G([B])=\N(B)$, so the analogue of a level stabilizer $\St_G(n)$ is an \emph{orbit stabilizer}
$$\St_{\mathcal{B}}(b^G):=\bigcap (\N(B^g) \mid g\in G )$$
for  $b=[B]\in\mathcal{B}$.
Since each basal subgroup has only finitely many conjugates, these orbit stabilizers have finite index in $G$.
Furthermore, the intersection of all orbit stabilizers is trivial.
We take the family
$\{ \St_{\mathcal{B}}(b^G) \mid b\in \mathcal{B}\}$ 
as a neighbourhood basis of the identity to define the \emph{congruence topology} of $G$ with respect to the action of $G$ on $\mathcal{B}$.
The completion of $G$ with respect to this topology is a profinite group $\overline{G}_{\mathcal{B}}$ onto which the profinite completion $\widehat{G}$ maps.
We denote the kernel of this map by $C_{\mathcal{B}}$. 
In the following theorem we prove that for any branch action $\rho$ of $G$, 
the topologies induced by $\{\St_{\rho}(n)\mid n\geq 0\}$ and by $\{\St_{\mathcal{B}}(b^G)\mid b\in \mathcal{B}\}$ are equal.
Thus the congruence kernels with respect to each of them coincide and \autoref*{congruence} as stated in the introduction follows.


\begin{thm}
 For any branch action $\rho:G\rightarrow \Aut(T)$, denote by $C_{\rho}$ the congruence kernel with respect to this action.
 Then $C_{\rho}=C_{\mathcal{B}}$.
\end{thm}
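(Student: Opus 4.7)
The plan is to show that the two topologies on $G$ coincide as filters, i.e.\ every subgroup in one neighbourhood basis contains some subgroup in the other. Since both topologies give rise to surjective continuous maps from $\widehat{G}$ to the respective completions, equality of topologies forces equality of these maps and hence of their kernels.

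First I would unwind what the two families of basic neighbourhoods look like. Using the $G$-equivariant embedding $\phi: T \to \mathcal{B}$, $v \mapsto [\rst_\rho(v)]$ from \autoref*{prop}, the $G$-orbit of $\phi(v)$ in $\mathcal{B}$ corresponds to the $G$-orbit $V_n$ of $v$ in $T$ (where $n$ is the level of $v$). By \autoref*{basalprops}(iii), $\St_G([\rst_\rho(v)]) = \N(\rst_\rho(v))$; combined with the injectivity of $\phi$ and the identity $\rst_\rho(v)^g = \rst_\rho(v^g)$, this gives $\N(\rst_\rho(v)) = \St_G(v)$. Hence, for any $v \in V_n$,
\begin{equation*}
\St_{\mathcal{B}}(\phi(v)^G) = \bigcap_{g \in G} \N(\rst_\rho(v)^g) = \bigcap_{g \in G} \St_G(v^g) = \bigcap_{w \in V_n} \St_G(w) = \St_\rho(n),
\end{equation*}
the last equality using transitivity of $G$ on $V_n$. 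This shows directly that each $\St_\rho(n)$ is itself an orbit stabilizer in $\mathcal{B}$, so the congruence topology w.r.t.\ $\mathcal{B}$ is at least as fine as the one w.r.t.\ $\rho$.

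For the reverse inclusion, I would fix an arbitrary vertex $b = [B]$ of $\mathcal{B}$ and use \autoref*{lemma} to find some $v \in T$ with $[\rst_\rho(v)] \leq [B]$. By \autoref*{basalprops}(ii), $\N(\rst_\rho(v)) \leq \N(B)$, and conjugating by any $g \in G$ yields $\N(\rst_\rho(v^g)) \leq \N(B^g)$. Intersecting over $g \in G$ and using the identification above,
\begin{equation*}
\St_\rho(n) = \bigcap_{g \in G} \N(\rst_\rho(v^g)) \leq \bigcap_{g \in G} \N(B^g) = \St_{\mathcal{B}}(b^G),
\end{equation*}
where $n$ is the level of $v$. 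So every basic neighbourhood in the $\mathcal{B}$-congruence topology contains a basic neighbourhood in the $\rho$-congruence topology.

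Combining the two inclusions, the two congruence topologies on $G$ coincide, so the induced profinite completions are canonically isomorphic and the congruence kernels $C_\rho$ and $C_{\mathcal{B}}$ (both kernels of the map from $\widehat{G}$ onto the common completion) are equal. I expect no serious obstacle beyond correctly matching the definitions; the only subtle point is checking $\N(\rst_\rho(v)) = \St_G(v)$, which is where the injectivity of $\phi$ is genuinely used.
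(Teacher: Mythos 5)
Your proof is correct and follows essentially the same route as the paper: the easy direction via the embedding $\phi$ of \autoref*{prop}, and the reverse direction via \autoref*{lemma} together with the fact that containment of classes of basal subgroups forces containment of normalizers (you cite \autoref*{basalprops}(ii), while the paper re-runs that same computation inline). Your explicit identification $\N(\rst_\rho(v))=\St_G(v)$, resting on the injectivity of $\phi$, is exactly the point the paper also relies on.
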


\begin{proof}
 We must show that for every $n\geq 0$ there is some $b\in \mathcal{B}$ such that $\St_{\rho}(n)\geq\St_{\mathcal{B}}(b^G)$
 and conversely, that for every $b\in \mathcal{B}$ there is some $n\geq 0$ such that $\St_{\mathcal{B}}(b^G)\geq \St_{\rho}(n)$.
 However, by \autoref*{prop}, the $n$th layer $V_n$ of the tree corresponds to an orbit of basal subgroups so that the former statement holds trivially.
 It therefore suffices to show the latter statement.
 
 For a given $b=[B]\in \mathcal{B}$, \autoref*{lemma} gives a vertex $v$ of $T$ such that $[\rst_{\rho}(v)]\leq b$.
 Suppose that $v$ is in the $n$th layer of $T$ and let $x\in \St_{\rho}(n)$. 
 Note that $\St_{\rho}(n)$ is the pointwise stabilizer of the $G$-orbit of $[\rst_{\rho}(v)]$ by  \autoref*{prop}.
 Then 
 \setlength{\abovedisplayskip}{1pt}
\setlength{\belowdisplayskip}{1pt}
 $$1\neq [\rst_{\rho}(v^x)]=[\rst_{\rho}(v)]\leq b^x \wedge b.$$
 Since $B$ is basal this implies that $B^x=B$ (so $b^x=b$).
 Thus the normal subgroup $\St_{\rho}(n)$ fixes all elements of $b^G$ and we conclude that $\St_{\rho}(n)\leq \St_{\mathcal{B}}(b^G)$, as required.
\end{proof}

\subsection*{The branch topology}
As with the congruence kernel, \autoref*{branchkernel} will follow from the fact that
the branch kernel with respect to a branch action is equal to the analogous object for the action on the structure graph. 
We must therefore define the branch topology with respect to $\mathcal{B}$.
To do this we generalize the notion of a rigid stabilizer.
 For a non-trivial basal subgroup $A$ of a branch group, define its \emph{rigid normalizer} $\R(A)$ by
 $$\R(A):=\bigcap (\N(B) \mid B \text{ is basal and } A\cap B=1).$$

We will use the following properties of rigid normalizers. 
These were proved in \cite{Hardy} but we include a shorter argument for the reader's convenience.

\begin{prop}\label{rigid}
Let $A, A_1, A_2$ be basal subgroups and $v$ a vertex of a tree on which $G$ acts as a branch group. Then
 \begin{enumerate}[leftmargin=2em,label=\textup{(\roman*)}]
  \item\label{A<R(A)} $A\leq \R(A)\leq \N(A);$
  \item\label{equalR} if $[A_1]\leq [A_2]$ then $\R(A_1)\leq \R(A_2)$ 
  \textup{(}in particular, $\R(A_1)=\R(A_2)$ if $[A_1]=[A_2]);$
  \item $\R(A)$ is basal and the unique maximal element of $[A]$;
  \item\label{Rrst} $\R(\rst(v))=\rst(v)$.
 \end{enumerate}
\end{prop}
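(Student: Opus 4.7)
The plan is to handle the four parts in order, using the machinery of Section 2 together with \autoref*{basalprops}. For (i), the containment $A\leq \R(A)$ follows directly from \autoref*{trivial_commuting}: any basal $B$ disjoint from $A$ satisfies $[A,B]=1$ and is therefore normalized (even centralized) by $A$. For $\R(A)\leq \N(A)$, I would take $r\in \R(A)$ and exploit the basality of $A$: if $A^r\neq A$, then $A^r\cap A=1$, making $A^r$ itself a basal subgroup disjoint from $A$, so by definition of $\R(A)$ it is normalized by $r$; this gives $(A^r)^r=A^r$, i.e.\ $A^r=A$, a contradiction. Part (ii) is a lattice computation: if $B$ is basal with $A_2\cap B=1$, then $[A_1\cap B]\leq [A_2\cap B]=[1]$ in $\mathcal{L}$, so $A_1\cap B\in L(G)$ is virtually soluble and hence trivial, whence $\R(A_1)\leq \N(B)$ by definition.

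Part (iii) is the substantive one. A first observation is that the argument just used for (ii) shows more: every $H\in L(G)$ with $[H]\leq[A]$ is contained in $\R(A)$. This takes care of both the maximality and the uniqueness assertions of (iii), provided we know $[\R(A)]=[A]$. My strategy for the latter is to reduce it to the sub-claim $\R(A)\cap\C(A)=1$. For the reduction, \autoref*{pseudo-complements} supplies $G_0\nf G$ with $G_0'\leq A\times\C(A)$; setting $R_0:=\R(A)\cap G_0$, of finite index in $\R(A)$, the modular law yields
\[
R_0' \leq \R(A)\cap (A\times\C(A)) = A\,(\R(A)\cap\C(A)) = A,
\]
so $A\lva \R(A)$ and hence $[\R(A)]=[A]$. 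For the sub-claim itself, the key is part (iv) of \autoref*{basalprops}: given $x\in\R(A)\cap\C(A)$, I would show $x$ normalizes every basal $B$ of $G$. When $A\cap B=1$ this is immediate from the definition; when $A\cap B\neq 1$, part (i) of \autoref*{basalprops} says $A\cap B$ is basal, and since $x\in\C(A)$ fixes $A\cap B$ pointwise, the non-trivial subgroup $A\cap B$ lies in $B\cap B^x$, forcing $B^x=B$ by the basality of $B$. Then $x\in\bigcap_B \N(B)=1$. Once $[\R(A)]=[A]$ is established, basality of $\R(A)$ drops out from $\R(A)^g=\R(A^g)$: for $g\notin \N(A)$ we have $[\R(A)]\wedge[\R(A^g)]=[A]\wedge[A^g]=[1]$, so $\R(A)\cap\R(A^g)$ is trivial in $L(G)$.

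For (iv), the containment $\rst(v)\leq\R(\rst(v))$ is immediate from (i). Conversely, let $x\in\R(\rst(v))$; by (i) and the injectivity of $\phi:T\to\mathcal{B}$ (\autoref*{prop}), the identity $\rst(v)^x=\rst(v^x)$ forces $v^x=v$, so $x$ fixes $v$ together with every ancestor of $v$. If $w$ is incomparable to $v$, then $\rst(w)\cap\rst(v)=1$, so $x$ normalizes $\rst(w)$ by the defining property of $\R(\rst(v))$, and the same $\phi$-argument yields $w^x=w$. Every vertex outside $T_v$ is either an ancestor of $v$ or incomparable to $v$, so $x\in\rst(v)$. The main obstacle throughout is pinning down the identity $\R(A)\cap\C(A)=1$; once this is in place, everything else reduces to routine manipulation in the structure lattice or to a direct application of the embedding $\phi$.
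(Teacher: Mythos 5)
Your proof is correct, and parts (i), (ii) and (iv) are essentially the paper's arguments (your variant of (i), conjugating $A$ by $r$ twice, and your handling of ancestors in (iv) via the fact that a tree automorphism fixing $v$ fixes its ancestors, are harmless reshufflings). Part (iii) is where you genuinely diverge. The paper certifies $\R(A)\sim A$ via condition \ref*{commonD} of \autoref*{vaiffcentralizers}: it takes $D$ to be the product of the conjugates of $A$ other than $A$, so that $A\times D=A^G\nva G$, and shows $\R(A)\cap D=1$; your route instead proves the stronger statement $\R(A)\cap\C(A)=1$ and then feeds it through the Dedekind modular law to get $R_0'\leq\R(A)\cap(A\times\C(A))=A$, i.e.\ $A\lva\R(A)$, which is condition \ref*{H=K} of the same lemma. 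The triviality argument itself (an element of $\R(A)$ centralizing $A$ normalizes every basal $B$, splitting into the cases $A\cap B=1$ and $A\cap B\neq 1$, then invoking $\bigcap\N(B)=1$) is the same in both proofs, since the paper's $D$ sits inside $\C(A)$ and only that containment is used. For basality of $\R(A)$ the paper argues through centralizer equalities ($\C(\R(A))=\C(A)$ gives $[\R(A)^g,\R(A)]=1$), whereas you compute $[\R(A)\cap\R(A)^g]=[A]\wedge[A^g]=[1]$ directly in $\mathcal{L}$; your version is shorter and, via \autoref*{trivial_commuting}, recovers the commuting statement for free, which is what one needs to see that $\langle\R(A)^G\rangle$ is actually a direct product. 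On balance your treatment of (iii) is a little cleaner in that it never introduces $D$, at the mild cost of invoking the modular law and the congruence property of $\wedge$ on $\mathcal{L}$ where the paper stays with explicit subgroup manipulations.
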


\begin{proof}
 \begin{enumerate}[fullwidth,label=(\roman*),topsep=0pt]
  \item Let $B$ be a basal subgroup with $A\cap B=1$.
  Then $[A,B]=1$ by \autoref*{trivial_commuting}
  and so $A\leq \N(B)$; thus $A\leq \R(A)$.
    If $g\in\R(A)$ then $g$ normalizes each of the conjugates of $A$
    distinct from $A$ as they are basal and have trivial intersection with $A$. 
  Therefore $g$ must normalize $A$ itself.

  \item Since $[A_1]\leq[A_2]$, there is some finite index subgroup $K$ of $A_1$ such that $K'\leq A_1\cap A_2$.
  Now, for a basal subgroup $B$ with $B\cap A_2=1$, we have
  $$(K\cap B)'\leq K'\cap B\leq A_1\cap A_2\cap B\leq A_2\cap B=1$$
  and $B\cap K\lf B\cap A_1$.
  That is to say, $B\cap A_1$ is virtually abelian and so $B\cap A_1=1$.
  Hence every $g\in \R(A_1)$ normalizes $B$ and $\R(A_1)\leq \R(A_2)$, as required.

  \item We first show that $\R(A)\sim A$ using \autoref*{vaiffcentralizers}\ref*{commonD}.
   Let $D$ denote the product of the finitely many conjugates of $A$ that are distinct from $A$; 
  then $A^G=A\times D\nva G$.
  Since $A\leq \R(A)$, it suffices to show that $\R(A)\cap D=1$.
  Let $x\in \R(A)\cap D$ and $B$ be a basal subgroup. 
  If $B\cap A=1$ then $x\in \R(A)$ normalizes $B$.
  If $B\cap A\neq 1$ then $B\cap A\leq A$ is basal and is centralized by $x\in D\leq \C(A)$; 
  hence $x\in \N(B\cap A)\leq \N(B)$ by \autoref*{basalprops}.
  Thus $x\in \bigcap (\N(B) \mid B \text{ is basal })=1$.
   
  To see that $\R(A)$ is basal, suppose that $\R(A)^g\neq\R(A)$ for some $g\in G$.
  Then, as $\R(A)^g=\R(A^g)$, we have $A\neq A^g$ by \ref*{equalR} so that $A^g\cap A=1$ and $[A^g,A]=1$ by \autoref*{trivial_commuting}.
  Now, $\R(A)\sim A$, so $\C(\R(A))=\C(A)$ by \autoref*{vaiffcentralizers}  which implies that $[A^g,\R(A)]=1$. 
  Similarly, $\R(A)^g\sim A^g$ implies that $[\R(A)^g,\R(A)]=1$, yielding $\R(A)^g\cap \R(A)=1$.
  
  For any $H\in[A]$ and any basal subgroup $B$ such that $B\cap A=1$ we have $[B,A]=1$, so $B\leq \C(A)=\C(H)$.
  This means that $[B,H]=1$, in particular, $H\leq \N(B)$. 
  Hence $H\leq \R(A)$.

  \item By part \ref*{A<R(A)}, it suffices to show that $\R(\rst(v))\leq \rst(v)$. 
  Let $g\in \R(\rst(v))$ and $w\in T\setminus T_v$.
  If $w$ is incomparable with $v$ then $\rst(w)\cap \rst(v)=1$ and so $g$ fixes $w$.
  If $v\leq w$ then $\rst(v)\leq \rst(w)$ and $\R(\rst(v))\leq\N(\rst(v))\leq\N(\rst(w))$ by \autoref*{basalprops}. 
  Thus $w=w^g$ and the claim follows.
 \end{enumerate}
\end{proof}

By analogy with the rigid stabilizers of layers, 
the \emph{rigid normalizer of an orbit}  ${a^G=[A]^G}$ in $\mathcal{B}$ 
is defined to be $\R(a^G):=\R(A)^G$.
That these rigid normalizers of orbits have finite index in $G$ follows from \autoref*{lemma}.
It also follows that the intersection of all of them is trivial.
We may therefore take $\{\R(a^G)\mid a\in \mathcal{B} \}$ as a neighbourhood basis of the identity 
to generate the \emph{branch topology} of $G$ with respect to the action of $G$ on $\mathcal{B}$.
The completion $\widetilde{G}_{\mathcal{B}}$ with respect to this topology is a profinite group in which $G$ embeds.
We denote by $B_{\mathcal{B}}$ the kernel of the map $\widehat{G}\rightarrow \widetilde{G}_{\mathcal{B}}$.
In our last theorem we prove that for any branch action $\rho$ of $G$, 
the topologies induced by $\{\rst_{\rho}(n)\mid n\geq 0\}$ and by $\{\R(a^G) \mid a \in \mathcal{B}\}$ are equal;
consequently, the branch kernels with respect to each of them coincide and \autoref*{branchkernel} as stated in the introduction follows.

\begin{thm}
 For a branch action $\rho: G \rightarrow \Aut(T)$, denote by $B_{\rho}$ the branch kernel with respect to this action. 
 Then $B_{\rho}=B_{\mathcal{B}}$.
\end{thm}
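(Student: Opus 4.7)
The plan is to mimic the strategy used for the congruence version of this theorem: show that the two systems of neighbourhoods of the identity, $\{\rst_{\rho}(n) \mid n\geq 0\}$ and $\{\R(a^G) \mid a \in \mathcal{B}\}$, mutually refine each other, so that they generate the same topology on $G$ and therefore the same completion and kernel. Concretely, I would prove: (a) for every $n\geq0$ there is $a\in\mathcal{B}$ with $\R(a^G)\leq \rst_{\rho}(n)$; and (b) for every $a\in\mathcal{B}$ there is $n\geq 0$ with $\rst_{\rho}(n)\leq \R(a^G)$.

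For (a), the $G$-equivariant embedding $\phi: T\to\mathcal{B}$ of \autoref{prop} sends each vertex $v\in V_n$ to the vertex $a_v:=[\rst_{\rho}(v)]\in\mathcal{B}$, and by \autoref{rigid}\ref{Rrst} one has $\R(\rst_{\rho}(v))=\rst_{\rho}(v)$. Since $G$ acts transitively on $V_n$, the $G$-conjugates of $\rst_{\rho}(v)$ are precisely the rigid stabilizers of the vertices in $V_n$, so
\[
\R(a_v^G)=\R(\rst_{\rho}(v))^G=\langle \rst_{\rho}(w) \mid w\in V_n\rangle=\rst_{\rho}(n),
\]
giving (a) with equality.

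For (b), take $a=[A]\in\mathcal{B}$ with $A$ basal. By \autoref{lemma} there exists a vertex $v\in T$ with $[\rst_{\rho}(v)]\leq [A]$, and then \autoref{rigid}\ref{equalR} yields $\R(\rst_{\rho}(v))\leq \R(A)$; combined with \autoref{rigid}\ref{Rrst}, this gives $\rst_{\rho}(v)\leq \R(A)$. Letting $n$ be the level of $v$, each $w\in V_n$ has the form $w=v^g$ for some $g\in G$ by level-transitivity, so
\[
\rst_{\rho}(w)=\rst_{\rho}(v)^g\leq \R(A)^g\leq \R(A)^G=\R(a^G),
\]
whence $\rst_{\rho}(n)=\langle \rst_{\rho}(w)\mid w\in V_n\rangle\leq \R(a^G)$, proving (b).

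Steps (a) and (b) together imply that the branch topology with respect to $\rho$ coincides with the branch topology with respect to $\mathcal{B}$, and hence $B_{\rho}=B_{\mathcal{B}}$. I do not anticipate a serious obstacle: all the substantive work has been done in \autoref{rigid} (especially the non-trivial identity $\R(\rst(v))=\rst(v)$) and in \autoref{lemma}; the proof is essentially a bookkeeping exercise combining level-transitivity with these two results, exactly parallel to the argument already given for the congruence kernel.
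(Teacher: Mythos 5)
Your proof is correct and follows essentially the same route as the paper: both directions rest on \autoref{rigid}\ref{Rrst}, \autoref{rigid}\ref{equalR}, \autoref{lemma}, and level-transitivity, exactly as in the paper's argument. The only difference is that you spell out a few steps (such as the identification $\R(a_v^G)=\rst_{\rho}(n)$) that the paper leaves implicit.
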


\begin{proof}
 As with the proof for the congruence kernels, we must show that
 for every $n\geq 0$ there is some $b\in \mathcal{B}$ such that $\rst_{\rho}(n)\geq\ \R(b^G)$
 and  that for every $b\in \mathcal{B}$ there is some $n\geq 0$ such that $\R(b^G)\geq \rst_{\rho}(n)$.
 The former claim follows from \autoref*{rigid}\ref*{Rrst} since all rigid stabilizers of vertices are basal.
 It therefore suffices to prove the latter claim.
 Given $b\in\mathcal{B}$, \autoref*{lemma} yields that $[\rst_{\rho}(v)]\leq b$ for some $v\in T$.
 Suppose that $v\in V_n\subset T$.
 We then have
 $\rst_{\rho}(v)=\R(\rst_{\rho}(v))\leq \R(b)$ by \autoref*{rigid}, 
 and the transitivity of $G$ on each of the layers of $T$ 
 implies that $\rst_{\rho}(n)\leq \R(b^G)$, as required.
\end{proof}




\bibliographystyle{abbrv}
\bibliography{cspindependent}

\end{document}